\def\thesection{\arabic{section}}
\def\theequation{\thesection.\arabic{equation}}
\def\R{\mathbb{R}}
\newcommand{\e}{\varepsilon}
\newcommand{\noi} {\noindent}
\def\theequation{\@arabic{\c@section}.\@arabic{\c@equation}}
\newtheorem{Theorem}{Theorem}[section]
\newtheorem{Lemma}[Theorem]{Lemma}
\newtheorem{prop}[Theorem]{Proposition}
\newtheorem{Corollary}[Theorem]{Corollary}
\newtheorem{Remark}[Theorem]{Remark}
\newtheorem*{ack}{Acknowledgements}
\begin{document}

\title{Decay of extremals of Morrey's inequality}
\author{Ryan Hynd, Simon Larson and Erik Lindgren}

\maketitle

\begin{abstract}\noindent
We study the decay (at infinity) of extremals of Morrey's inequality in $\R^n$. These are functions satisfying
$$
\displaystyle \sup_{x\neq y}\frac{|u(x)-u(y)|}{|x-y|^{1-\frac{n}{p}}}= C(p,n)\|\nabla u\|_{L^p(\R^n)} , 
$$
where $p>n$ and $C(p,n)$ is the optimal constant in Morrey's inequality. We prove that if $n \geq 2$ then any extremal has a power decay of order $\beta$ for any
$$
\beta<-\frac13+\frac{2}{3(p-1)}+\sqrt{\left(-\frac13+\frac{2}{3(p-1)}\right)^2+\frac13}.
$$
\end{abstract}

\noi {Keywords: Morrey's inequality, Decay at infinity, The $p$-Laplace equation}

\noi{\textit{2020 Mathematics Subject Classification: 35B65, 35J70}

{\vspace{0.01in}}

%\tableofcontents

\section{Introduction}
Morrey's classical inequality in $\R^n$ states that for $p>n$, there is a constant $C=C(p,n)$ such that
\begin{equation}\label{eq:morrey}
[u]_{C^{0,1-\frac{n}{p}}(\R^n)}=\sup_{ x\neq y}\frac{|u(x)-u(y)|}{|x-y|^{1-\frac{n}{p}}}\leq  C(p,n)\left(\int_{\R^n}|\nabla u|^p dx\right)^\frac1p,
\end{equation}
for all functions whose first order partial derivatives belong to $L^p(\R^n)$. In a series of papers (cf. \cite{HS,HS2,HS3}), Hynd and Seuffert study this inequality and prove that there is a smallest constant $C>0$ such that \eqref{eq:morrey} holds and that there are extremals of this inequality. An extremal is a function for which equality is attained in \eqref{eq:morrey}.
 They also prove that up to translation, rotation, dilatation and multiplication by a constant, any extremal function $u$ satisfies
\begin{enumerate}\label{extremal}
\item $-\Delta_p u =  \delta_{e_n}-\delta_{-e_n}$ in $\R^n$,
\item $|u|\leq 1$,  $u(e_n)=1$, $u(-e_n)=-1$,
\item $u$ is antisymmetric with respect to the $x_n$-variable,
\item $u$ is positive in $\R^n\cap \{x_n>0\}$.

\end{enumerate}
See Theorem 2.4 and Propositions 3.1, 3.4 and 3.5 in \cite{HS}. Here $\Delta_p u:=\nabla\cdot (|\nabla u|^{p-2}\nabla u)$ is the $p$-Laplace operator. In addition to this, they study the behavior at infinity of extremals in dimensions $n\geq 2$ and prove that there is $\beta>0$ and $C>0$ such that 
\begin{equation}
\label{dec}
\sup_{|x|\geq R} |u|\leq CR^{-\beta},\quad \text{for all $R$}.
\end{equation}
See Corollary 4.7 in \cite{HS2}. However, no estimate of $\beta$ is given.

The main objective of this paper is to provide an explicit exponent $\beta$. More precisely, we prove the following theorem.
\begin{Theorem} \label{teo:1} Suppose $p>n\geq 2$, that $u$ is an extremal of \eqref{eq:morrey} satisfying properties (1)-(4) above and
$$\beta<-\frac13+\frac{2}{3(p-1)}+\sqrt{\left(-\frac13+\frac{2}{3(p-1)}\right)^2+\frac13}.
$$
Then there is $C_1=C_1(\beta,p,n)$ such that
$$
|u(x)|\leq C_1|x|^{-\beta},
$$
for all $|x|\geq 1$.
\end{Theorem}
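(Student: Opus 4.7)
My plan is to deduce the decay bound via a comparison-principle argument against an explicit $p$-supersolution. By property (3) (antisymmetry in $x_n$) and property (4) (positivity on $\{x_n > 0\}$), it suffices to produce an upper bound for $u$ on the exterior half-domain $E = \{x_n > 0\} \cap (\R^n \setminus \ov{B_1})$, where $u$ is $p$-harmonic by property (1). I look for a barrier $v \geq 0$ that is a $p$-supersolution on $E$, vanishes on $\{x_n = 0\}$, and decays like $|x|^{-\beta}$ at infinity. Granted such a $v$, pick $M$ large so that $Mv \geq u$ on $\partial B_1 \cap \{x_n \geq 0\}$ (both vanish linearly in $x_n$ there, so a Hopf-type estimate bounds the ratio) and apply comparison on $D_R = \{x_n > 0\} \cap (B_R \setminus \ov{B_1})$: both $u$ and $Mv$ vanish on $\{x_n = 0\}$; $u \leq Mv$ on $\partial B_1$ by construction; and $u \leq Mv$ on $\partial B_R \cap \{x_n > 0\}$ for $R$ large, by the pre-existing decay \eqref{dec}. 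Sending $R \to \infty$ yields $|u(x)| \leq C_1|x|^{-\beta}$ for $|x| \geq 1$, and antisymmetry handles the lower half-space.

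A natural ansatz for the barrier is
\[
v(x) = r^{-\beta}\,\phi(\omega_n), \qquad r = |x|,\ \omega_n = x_n/r,
\]
where $\phi\colon[-1,1]\to\R$ is odd and strictly positive on $(0,1]$. Using the radial--spherical decomposition of $\nabla$ and the identity $|\nabla_\omega \omega_n|^2 = 1-\omega_n^2$, one finds
\[
|\nabla v|^2 = r^{-2(\beta+1)}\bigl[\beta^2 \phi(\omega_n)^2 + (1-\omega_n^2)\phi'(\omega_n)^2\bigr].
\]
Computing the divergence of $|\nabla v|^{p-2}\nabla v$ and factoring out the overall power of $r$, the supersolution condition $-\Delta_p v \geq 0$ reduces to the pointwise nonnegativity on $\omega_n \in [0,1]$ of an angular expression $F(\omega_n;\phi,\beta,p,n)$ built from $\phi,\phi',\phi''$. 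Optimizing over $\phi$ is expected to produce the admissibility condition $3(p-1)\beta^2 + 2(p-3)\beta - (p-1) \leq 0$, whose positive root is exactly the stated bound.

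The technical heart of the proof is the angular analysis. The simplest choice $\phi(t) = t$, i.e.\ $v(x) = x_n|x|^{-1-\beta}$, yields a valid barrier only for $\beta \leq \min\bigl\{(n-1)/(p-1),\,(n+p-3)/(2p-3)\bigr\}$: the pointwise inequality breaks down near the axis $\omega_n = 1$ once $\beta$ exceeds this threshold. Obtaining the sharper, dimension-free bound of the theorem therefore requires a more carefully engineered profile $\phi$ whose behavior near $\omega_n = 1$ is tuned so that the axial and equatorial contributions to $F$ remain simultaneously nonnegative. The striking cancellation of $n$ in the final criterion indicates a specific algebraic structure in the optimal $\phi$, and verifying the pointwise inequality for that $\phi$ throughout $[0,1]$ will be the principal obstacle.
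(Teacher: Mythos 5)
Your overall strategy (compare $u$ in the exterior half-domain with a $\beta$-homogeneous supersolution) is reasonable, and you have correctly identified the quadratic $3(p-1)\beta^2+2(p-3)\beta-(p-1)=0$ whose positive root is $\beta_p$. But the proof has a hole exactly where you flag ``the principal obstacle'': the barrier is never constructed, and I do not believe your spherical ansatz $v=r^{-\beta}\phi(x_n/r)$ can produce it with the dimension-free threshold $\beta_p$. The separated singular solutions of $\Delta_p$ in the upper half-space of $\R^n$ that are homogeneous in $|x|$ have a critical exponent that genuinely depends on $n$ (already for $p=2$ it is $n-1$, coming from $x_n|x|^{-n}$) and is not explicit for $n\geq 3$; there is no reason the angular ODE in $\omega_n$ should yield the $n$-free condition you posit. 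In the paper the exponent $\beta_p$ comes from a different object: Aronsson's explicit \emph{two-dimensional} separable $p$-harmonic functions $\rho^{-\kappa}f(\phi)$, with $\rho=\sqrt{x_{n-1}^2+x_n^2}$ and $\phi$ the polar angle in the $x_{n-1}x_n$-plane, extended trivially in the remaining variables; $\beta_p$ is the value of $\kappa$ for which the cone of positivity is exactly a half-plane. That barrier, however, decays like $\rho^{-\beta}$ and not like $|x|^{-\beta}$ (it does not decay at all along $\{x_{n-1}=x_n=0\}$), so for $n\geq 3$ it is useless for the direct exterior comparison you propose: dominating $u$ by $M\rho^{-\beta}f(\phi)$ does not give $u\leq C|x|^{-\beta}$.

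The paper resolves this by inverting the geometry: a dichotomy/contradiction argument with rescalings $v_j(x)=u(r_jx)/S_{r_j}$ produces a blow-down limit $v$ that is $p$-harmonic in $\R^n\setminus\{0\}$ and blows up at the origin no faster than $|x|^{-\beta}$. Near the origin the inequality $\rho^{-\beta-\tau}\geq|x|^{-\beta-\tau}$ goes the \emph{right} way, so the Aronsson barrier (used as $1+\e w$ in an annulus $B_1\setminus B_\rho$) shows $v$ is bounded; the Kichenassamy--V\'eron removable singularity theorem plus a Liouville result then force $v\equiv 0$, contradicting the normalization. So the barrier is only ever used in a bounded annulus, never in the exterior. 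A secondary, fixable issue in your write-up: on $\partial B_R$ you cannot verify $u\leq Mv$ from \eqref{dec}, since the a priori decay rate there is some unknown $\beta_0$ possibly much smaller than $\beta$, so $CR^{-\beta_0}$ eventually exceeds $McR^{-\beta}$; the standard remedy is to compare with $Mv+\e$ and let $\e\to 0$ afterwards. If you want to salvage a direct comparison proof along your lines, you would have to establish that the $n$-dimensional half-space separable exponent dominates $\beta_p$ for all $n\geq 3$ --- a nontrivial claim about a non-explicit quantity that your proposal neither states nor proves.
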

As a corollary, we obtain the corresponding decay for the gradient.
\begin{Corollary}\label{cor:1} Under the assumptions of Theorem \ref{teo:1}, there is $C_2=C_2(\beta,p,n)$ such that
$$
|\nabla u(x)|\leq C_2|x|^{-\beta-1},
$$
for all $|x|\geq 2$.
\end{Corollary}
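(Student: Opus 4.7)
The plan is to combine the pointwise decay from Theorem~\ref{teo:1} with the standard interior gradient estimate for $p$-harmonic functions, applied on balls centered at $x_0$ that avoid the singular support $\{\pm e_n\}$ of $-\Delta_p u$.

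First, I would fix $x_0\in\R^n$ with $|x_0|\ge 2$ and set $R:=|x_0|/2\ge 1$. Every $y\in B_R(x_0)$ satisfies $|y|>|x_0|-R=|x_0|/2\ge 1$, so $B_R(x_0)$ is disjoint from $\{\pm e_n\}$. By property~(1), $u$ is a weak solution of $\Delta_p u=0$ in $B_R(x_0)$. Theorem~\ref{teo:1} then gives
$$
\|u\|_{L^\infty(B_R(x_0))}\le C_1\sup_{y\in B_R(x_0)}|y|^{-\beta}\le 2^\beta C_1|x_0|^{-\beta}.
$$

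Next, I would invoke the classical interior Lipschitz estimate for $p$-harmonic functions (DiBenedetto, Tolksdorf, Ural'tseva and Uhlenbeck): there exists $C=C(n,p)$ such that any $p$-harmonic $v$ on $B_1(0)$ satisfies $|\nabla v(0)|\le C\|v\|_{L^\infty(B_1(0))}$. Applying this to the rescaling $v(z):=u(x_0+Rz)$, which is $p$-harmonic on $B_1(0)$ by the scale invariance of $\Delta_p$, and using $\nabla v(0)=R\,\nabla u(x_0)$, one obtains
$$
|\nabla u(x_0)|\le \frac{C}{R}\,\|u\|_{L^\infty(B_R(x_0))}\le 2^{\beta+1}C\,C_1\,|x_0|^{-\beta-1},
$$
which is the claimed bound with $C_2:=2^{\beta+1}C\,C_1$.

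There is no real obstacle here: the only nontrivial input is the interior Lipschitz estimate for $p$-harmonic functions, whose constant depends only on $n$ and $p$ and which is by now classical. A fully self-contained alternative would be to combine a Caccioppoli inequality for derivatives of $p$-harmonic functions with Moser iteration on $|\nabla u|^p$, but citing the regularity theory directly yields the shortest argument and is what I would do for this corollary.
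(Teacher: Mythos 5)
Your argument is correct and is essentially the paper's own proof: cover the point $x_0$ by a ball of radius comparable to $|x_0|$ that avoids $\pm e_n$, apply the decay bound of Theorem~\ref{teo:1} there, and conclude with the scaled interior gradient estimate for $p$-harmonic functions (the paper uses $B_{|x|/4}(x)$ and cites the gradient estimate directly rather than spelling out the rescaling, but these are cosmetic differences). No gaps.
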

\begin{Remark}
    A couple of remarks:
    \begin{enumerate}
       
        \item By (2) above the conclusion of Theorem~\ref{teo:1} is valid also for $|x|\leq 1$. However, the same is not true for Corollary~\ref{cor:1}. Indeed, by~\cite[Proposition 2.8]{HL19} $|\nabla u(x)|$ becomes unbounded as $x \to \pm e_n$.

        \item In dimension one, the extremal satisfying (1)-(4) is explicitly given by 
        $$u(x) = 
        \begin{cases} 
            -1  &\mbox{for } x\leq -1,\\
            x  &\mbox{for } x \in (-1, 1),\\
            1  &\mbox{for }x\geq 1.
        \end{cases}$$
        Therefore, the assumption $n \geq 2$ is necessary in Theorem~\ref{teo:1}. However, the bound in Corollary~\ref{cor:1} is trivially true when $n=1$.
    \end{enumerate}
\end{Remark}

Although it is of intrinsic interest to further understand the extremal functions of Morrey's inequality, our motivation for the results in this short note stem from a particular application. Namely, in a forthcoming paper we address the existence of minimizers in a certain variational problem and an estimate for the decay of Morrey extremals and their gradients entered as a key technical ingredient.

\subsection{Known results} The asymptotic behavior at infinity for solutions of PDEs has been studied before. See for instance \cite{Ser65} where it is proved that bounded $p$-harmonic functions in exterior domains has a limit at infinity. Related results can also be found in \cite{KV86}, \cite{FP11} and \cite{FP13}. 

\subsection{Plan of the paper}
In Section \ref{sec:prel}, we discuss notation, definitions and certain prerequisites for this paper. This is followed by Section \ref{sec:aro}, where Aronsson's $p$-harmonic functions obtained through separation of variables are discussed. In Section \ref{sec:slow}, we study the singularities of functions that are $p$-harmonic in punctured domains. Finally, we prove our main results in Section \ref{sec:main}.

\begin{ack}
We wish to thank Peter Lindqvist, who drew our attention to Aronsson's paper \cite{Aro86}.

 R. H. was supported by an American Mathematical Society Claytor-Gilmer Fellowship. E. L. has been supported by the Swedish Research Council, grant no. 2017-03736 and 2016-03639. S. L. was supported by Knut and Alice Wallenberg Foundation grant KAW 2021.0193. 

 Part of this material is based upon work supported by the Swedish Research Council under grant no. 2016-06596 while all three authors were participating in the research program ``Geometric Aspects of Nonlinear Partial Differential Equations'', at Institut Mittag-Leffler in Djursholm, Sweden, during the fall of 2022.

 \end{ack}
 
\section{Preliminaries}\label{sec:prel} 
Throughout the paper we work in $\R^n$ with $p>n \geq 2$ and we will denote the exponent appearing in Theorem \ref{teo:1} by $$\beta_p:=-\frac13+\frac{2}{3(p-1)}+\sqrt{\left(-\frac13+\frac{2}{3(p-1)}\right)^2+\frac13}.$$

We will need a few results regarding $p$-harmonic functions. The following assertion is contained in Theorem 1.1 and Remark 1.6 in \cite{KV86}.
 \begin{Theorem}\label{teo:kv} Suppose that $|u|\leq 1$ in $B_1\setminus \{0\}$, $u\in W_\textup{loc}^{1,p}(B_1\setminus\{0\})$  and that 
 $$
- \Delta_p u = 0\text{ in }B_1\setminus \{0\}.
 $$
 Then $u\in W_\textup{loc}^{1,p}(B_1)$ and there is $\gamma$ such that  
 $$
- \Delta_p u =|\gamma|^{p-2}\gamma \delta_0\quad \text{in $B_1$}.
 $$
 \end{Theorem}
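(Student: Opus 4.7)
The plan is to prove the result in three steps: first establishing continuity of $u$ at the origin, then promoting $u$ to $W^{1,p}_\loc(B_1)$ via a Caccioppoli-type estimate, and finally computing $-\Delta_p u$ as a distribution on the full ball. Since $u \in L^\infty\cap W^{1,p}_\loc(B_1\setminus\{0\})$ is $p$-harmonic in the punctured ball, interior regularity gives $u \in C^{1,\alpha}_\loc(B_1\setminus\{0\})$. I would then apply Harnack's inequality to the non-negative $p$-harmonic functions $1\pm u$ on dyadic annuli $A_k := \{2^{-k-1}\le|x|\le 2^{-k+1}\}$ and combine the two resulting bounds via the standard ``both sides'' trick to obtain $\osc_{A_{k+1}} u\le \theta\,\osc_{A_k}u$ for some $\theta = \theta(p,n)\in (0,1)$. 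Iterating produces $|u(x)-L|\le C|x|^\mu$ near $0$, where $L := \lim_{x\to 0}u(x)$ exists and $\mu = \mu(p,n)>0$; in particular $u$ extends continuously at the origin.

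For the second step, set $v := u-L$ and take $\psi\in C_c^\infty(B_{3/4})$ with $\psi\equiv 1$ on $B_{1/2}$, together with a radial cutoff $\eta_\epsilon$ vanishing on $B_\epsilon$, equal to $1$ outside $B_{2\epsilon}$, with $|\nabla\eta_\epsilon|\le C/\epsilon$. Because $\psi^p\eta_\epsilon^p v$ vanishes near $0$, it is admissible for the punctured-ball equation; expanding and absorbing cross terms via Young's inequality yields
\[
\int\psi^p\eta_\epsilon^p|\nabla v|^p\,dx \;\le\; C\int |v|^p|\nabla\psi|^p\,dx + C\int_{B_{2\epsilon}\setminus B_\epsilon}|v|^p|\nabla\eta_\epsilon|^p\,dx.
\]
The first right-hand term is $O(1)$; the second, using $|v|\le C\epsilon^\mu$ and $|\nabla\eta_\epsilon|^p\le C\epsilon^{-p}$, is $O(\epsilon^{n-p+p\mu})$. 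Provided $\mu > (p-n)/p$ this vanishes as $\epsilon\to 0$, and monotone convergence gives $\nabla v \in L^p(B_{1/2})$; a covering argument then yields $u \in W^{1,p}_\loc(B_1)$.

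For the third step, with $|\nabla u|^{p-2}\nabla u\in L^{p/(p-1)}_\loc(B_1)$ the distribution $T := -\Delta_p u$ is well-defined on $B_1$. For any $\phi\in C_c^\infty(B_1)$, the product $\eta_\epsilon\phi$ is admissible in the punctured-ball equation so $\langle T,\eta_\epsilon\phi\rangle=0$, whence $\langle T,\phi\rangle = \langle T,(1-\eta_\epsilon)\phi\rangle$ for every $\epsilon>0$. Dominated convergence (using $|\nabla u|^{p-1}\in L^1_\loc$) sends the right-hand side as $\epsilon\to 0$ to a quantity depending on $\phi$ only through $\phi(0)$. Thus $T = c\,\delta_0$ for some $c\in\R$, and since $t\mapsto|t|^{p-2}t$ is a bijection of $\R$ for $p>1$, setting $\gamma := |c|^{1/(p-1)}\,\mathrm{sgn}(c)$ gives $c = |\gamma|^{p-2}\gamma$.

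The main obstacle is securing the oscillation decay $\mu > (p-n)/p$ in the second step: without it the Caccioppoli argument does not close, since the cutoff-derivative contribution fails to vanish in the limit. The Harnack-derived exponent does not automatically reach this threshold; the sharp value $\mu = (p-n)/(p-1)$ (the scaling of the fundamental solution of $\Delta_p$, which exceeds $(p-n)/p$ precisely because $p>n$) does, and extracting it is the principal technical work in \cite{KV86}.
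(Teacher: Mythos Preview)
The paper does not give its own proof of this statement; it is quoted verbatim from Kichenassamy--V\'eron \cite{KV86} (Theorem~1.1 and Remark~1.6 there) and used as a black box. So there is no ``paper's proof'' to compare against beyond the citation itself.

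Your outline is a faithful sketch of the strategy in \cite{KV86}: Harnack on dyadic annuli to get H\"older continuity at the origin with some exponent $\mu>0$, a Caccioppoli estimate with a double cutoff to upgrade to $W^{1,p}_{\loc}(B_1)$, and then a short distributional computation to identify $-\Delta_p u$ as a multiple of $\delta_0$. Steps~1 and~3 are fine as written. You have also correctly isolated the real difficulty: the Caccioppoli step closes only if $\mu>(p-n)/p$, and the generic Harnack argument on annuli does not deliver an exponent that large. Your proposal is therefore not a self-contained proof but a roadmap that, at the crucial point, defers back to \cite{KV86}---which is exactly what the paper does. If you want a genuinely independent argument you would need to reproduce the sharp oscillation estimate $\mu=(p-n)/(p-1)$ (or at least beat $(p-n)/p$), e.g.\ via comparison with the explicit radial $p$-harmonic barriers used in \cite{KV86}.
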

 The next result is Corollary 2.4 in \cite{HS2}.
 \begin{prop}\label{prop:ryan}
Suppose $u$ is bounded and satisfies
$$
- \Delta_p u =c\delta_{x_0}
$$
in $\R^n$ for some point $x_0$ and some constant $c$. Then $u$ is necessarily constant and $c=0$.
\end{prop}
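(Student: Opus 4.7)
My plan is to test the distributional equation $-\Delta_p u = c\delta_{x_0}$ against a cleverly chosen function that annihilates the right-hand side, then exploit a Caccioppoli-type estimate whose scaling is favourable because $p>n$.

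First I would verify that $u$ has a well-defined pointwise value at $x_0$ and lies in $W^{1,p}_\loc(\R^n)$. Away from $x_0$ the boundedness of $u$ together with $p$-harmonicity gives local $C^{1,\alpha}$ regularity by the standard theory, while near $x_0$ Theorem~\ref{teo:kv} (rescaled to a small ball centred at $x_0$) guarantees $u\in W^{1,p}_\loc$ there. Morrey's inequality then furnishes a Hölder continuous representative, so $u(x_0)$ makes sense and $|u(x_0)|\le M:=\|u\|_\infty$.

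Next, for $R>0$ I would pick a standard cutoff $\psi\in C_c^\infty(\R^n)$ with $\psi\equiv 1$ on $B_R(x_0)$, $\supp\psi\subset B_{2R}(x_0)$, $0\le\psi\le 1$ and $|\nabla\psi|\le 2/R$, and insert $\varphi:=(u-u(x_0))\psi^p$ into the weak form of the equation. The decisive point is that $\varphi(x_0)=0$, so the singular right-hand side $c\,\varphi(x_0)$ drops out; expanding $\nabla\varphi$ this leaves
$$
\int_{\R^n}|\nabla u|^p\psi^p\,dx=-p\int_{\R^n}|\nabla u|^{p-2}(\nabla u\cdot\nabla\psi)\psi^{p-1}(u-u(x_0))\,dx.
$$
Young's inequality then absorbs half of the left-hand side into the right and yields the Caccioppoli-type bound
$$
\int_{\R^n}|\nabla u|^p\psi^p\,dx\le C_p\int_{\R^n}|u-u(x_0)|^p|\nabla\psi|^p\,dx\le C\,M^p R^{n-p}.
$$

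Because $p>n$ the exponent $n-p$ is strictly negative, so sending $R\to\infty$ forces $\int_{\R^n}|\nabla u|^p=0$, and hence $u$ is constant. Any constant is $p$-harmonic on all of $\R^n$, which in turn forces $c\delta_{x_0}=0$ and therefore $c=0$. I do not expect any serious obstacle: the only delicate point is the initial regularity bookkeeping that legitimises $\varphi$ as a test function and $u(x_0)$ as a pointwise value; after that the essential trick is simply to subtract the value $u(x_0)$ before cutting off, thereby killing the Dirac mass and letting the subcriticality $p>n$ finish the argument.
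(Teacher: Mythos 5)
Your proof is correct. Note first that this paper offers no proof of the proposition to compare against: it is imported verbatim as Corollary 2.4 of \cite{HS2}. The usual route for statements of this type (and presumably the one taken there) is different: one first shows that if $c\neq 0$ then $u$ must track the fundamental solution $|x-x_0|^{\frac{p-n}{p-1}}$, which for $p>n$ is unbounded at infinity, contradicting boundedness, and one then invokes a Liouville theorem for bounded entire $p$-harmonic functions. Your argument collapses both steps into a single Caccioppoli estimate, and the two places where care is needed are exactly the ones you flag and handle correctly: (i) the test function $(u-u(x_0))\psi^p$ is only a compactly supported $W^{1,p}$ function, so the weak formulation must be extended from $C_c^\infty$ by density --- this works because $u\in W^{1,p}_\loc(\R^n)$ across $x_0$ (via Theorem \ref{teo:kv}, or built into the meaning of the equation) gives $|\nabla u|^{p-2}\nabla u\in L^{p/(p-1)}_\loc$, and because $\varphi\mapsto \varphi(x_0)$ is continuous under $W^{1,p}$-convergence on a fixed compact set precisely when $p>n$ (Morrey embedding); and (ii) the absorption via Young's inequality requires the a priori finiteness of $\int |\nabla u|^p\psi^p\,dx$, which again follows from $u\in W^{1,p}_\loc$. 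With those points in place you get $\int_K|\nabla u|^p\,dx\le C M^pR^{n-p}\to 0$ for every compact $K$, hence $u$ is constant and $c\,\delta_{x_0}=-\Delta_p u=0$. Your proof is self-contained, more elementary than the comparison-plus-Liouville route, and isolates $p>n$ as the only structural input.
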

\section{Solutions in the plane by separation of variables}\label{sec:aro}
In \cite{Aro86}, Aronsson studies $p$-harmonic functions for $p>2$ in sectors of $\R^2$ which have the form $u(r, \phi)=r^{-\kappa}f(\phi)$ for $\kappa> 0$\footnote{Note that $\kappa$ here corresponds to $-k$ in Aronsson's notation and therefore the resulting equations differ accordingly. Aronsson considers $k$ of arbitrary sign but here only singular solutions will be important.} and where $(r, \phi)$ are polar coordinates. In Lemma 1 case $\alpha$) in \cite{Aro86}, it is proved that $u$ is $p$-harmonic in the cone $r>0$, $\phi\in I$ if and only if 
\begin{equation}
\label{eq:geq}
g(\phi):=(f'(\phi))^2+\left(1+\frac{1}{a\kappa}\right)\kappa^2(f(\phi))^2>0, \quad a=\frac{p-1}{p-2}
\end{equation}
and there is a constant $C>0$ such that 
\begin{equation}
\label{eq:geq2}
[(f'(\phi))^2+\kappa^2(f(\phi))^2]^{-\kappa} = C^2|g(\phi)|^{-\kappa-1}.
\end{equation}
Recall that $p>n=2$ so $a>0$. On page 145 in \cite{Aro86}, the following semi-explicit formula for a solution is given:
$$
\phi =\theta-a(1+\kappa)\int_0^\theta \frac{1}{\cos^2\theta'+a\kappa} d\theta', \quad f=\left(1+\frac{\cos^2\theta}{a\kappa}\right)^\frac{-\kappa-1}{2}\cos \theta.
$$
In order to see that this implies \eqref{eq:geq} and \eqref{eq:geq2}, it is sufficient to compute $f'(\phi)$ and find that 
$$
f'(\phi)=\kappa\left(1+\frac{\cos^2\theta}{a\kappa}\right)^\frac{-\kappa-1}{2}\sin \theta
$$
so that 
$$
(f'(\phi))^2+\kappa^2(f(\phi))^2 = \kappa^2 \left(1+\frac{\cos^2\theta}{a\kappa}\right)^{-\kappa-1}.
$$
It follows that 
$$
g(\phi)=\kappa^2\left(1+\frac{\cos^2\theta}{a\kappa}\right)^{-\kappa}>0
$$
and that \eqref{eq:geq2} holds with $C=\kappa^2$.

Upon integration, the relation between $\phi$ and $\theta$ simplifies to
$$
\phi = \theta -\left(\frac{1}{\kappa}+1\right)\mu \arctan\left(\mu\tan\theta\right), \quad \mu = \frac{\sqrt{a\kappa}}{\sqrt{a\kappa+1}},
$$
for $\theta \in (-\pi/2,\pi/2)$. This implies that the range of possible $\phi$ is $I=(\phi(\pi/2), \phi(-\pi/2))$, which is an interval of length
$$
\tilde L=\pi\left(\mu\Bigl(1+\frac{1}{\kappa}\Bigr)-1\right).
$$
We also note that $f$ is positive when $\cos\theta$ is positive which is exactly on the interval $I$. Hence, this defines a positive solution of the $p$-Laplace equation in a cone with opening $\tilde L$, which is zero on the boundary rays of the cone.

Since we will be interested in solutions in cones with opening $\pi$ or larger, we let $\tilde L=\pi L$ and obtain  
\begin{equation}
\label{eq:Lk}
L=\frac{\sqrt{a\kappa}}{\sqrt{a\kappa+1}}\Bigl(1+\frac{1}{\kappa}\Bigr)-1,
\end{equation}
which implies
\begin{equation}
\label{eq:Lk2}
(L+1)^2=\frac{(\kappa+1)^2}{\kappa^2+\frac{\kappa}{a}}.
\end{equation}
Upon recalling that $a=(p-1)/(p-2)$, it is clear that $L$ is strictly decreasing in $p$. It is not hard to see that if $L=1$, \eqref{eq:Lk2} gives
$
\kappa=\beta_p.
$
This corresponds to a half plane solution. Here we observe that $\beta_p$ decreases to its limit $1/3$ as $p\to \infty$, hence $\beta_p>1/3$ for all $p> 2$. In addition, differentiating \eqref{eq:Lk} gives
$$
\frac{dL}{d\kappa}=\frac{\sqrt{a}(\kappa(1-2 a)- 1)}{2 \kappa^\frac32(a\kappa+ 1)^\frac32}< 0.
$$
Therefore, $L$ is strictly decreasing in $\kappa$. Hence, for $\delta>0$ there is a one to one correspondance between $L=1+\delta$ and the corresponding power $\kappa(\delta)<\beta_p$. Moreover, $\kappa(\delta)\to \beta_p$ as $\delta\to 0$. Upon renaming $\delta$, we may summarize our conclusions as:

\emph{For any power $\beta<\beta_p$, there is a $\delta>0$ and a $p$-harmonic function $u=r^{-\beta}f(\phi)$ in the cone $r>0$, $\phi\in (-\frac{\pi}{2}-\delta,\frac{\pi}{2}+\delta)$ which is positive for $\phi\in (-\frac{\pi}{2}-\delta,\frac{\pi}{2}+\delta)$ and satisfies $u(r,-\frac{\pi}{2}-\delta)=u(r,\frac{\pi}{2}+\delta)=0$.}

\section{Slow decay implies boundedness} \label{sec:slow}
In this section, we prove that certain global solutions of the $p$-Laplace equation cannot blow up at the origin if they blow up too slowly.
\begin{prop}\label{lem:bounded} Suppose $u\geq 0$, $\Delta_p u =0$ in $\R^n\cap \{x_n>0\}$, $u(x)=0$ for $x_n=0$ except possibly at the origin, $|u(x)|\leq 1$ for $|x|\geq 1$ and that
$$
|u(x)|\leq |x|^{-\beta},
$$
for $|x|\leq 1$ where $\beta<\beta_p$. Then $|u(x)|\leq 1$ for all $x\neq 0$.
\end{prop}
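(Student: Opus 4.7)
The plan is to argue by contradiction, combining the Aronsson-type barrier from Section~\ref{sec:aro} with an odd reflection across $\{x_n=0\}$ and the removability/Liouville-type results of Section~\ref{sec:prel}.

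Suppose toward contradiction that $u(x_\ast)>1$ at some $x_\ast$ with $(x_\ast)_n>0$. Applying the $p$-harmonic maximum principle on the half-annulus $\{\epsilon<|x|<R,\ x_n>0\}$ with $u=0$ on $\{x_n=0\}$, $u\leq 1$ on $|x|=R\geq 1$, and $u\leq\epsilon^{-\beta}$ on $|x|=\epsilon\leq 1$ forces $M(\epsilon):=\sup_{|x|=\epsilon,\,x_n>0}u>1$ for every sufficiently small $\epsilon>0$; conversely, if I can rule this out for small $\epsilon$ then $u\leq 1$ throughout. In parallel, I would odd-reflect $u$ across $\{x_n=0\}$ to obtain $\tilde u$ on $\R^n\setminus\{0\}$. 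Because $u$ vanishes on the hyperplane (with vanishing tangential trace), a standard integration-by-parts computation shows $\tilde u$ is weakly $p$-harmonic on $\R^n\setminus\{0\}$, with $|\tilde u|\leq|x|^{-\beta}$ for $|x|\leq 1$ and $|\tilde u|\leq 1$ for $|x|\geq 1$.

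The barrier comes from Section~\ref{sec:aro}. Since $\beta<\beta_p$, one has $\delta>0$ and a planar $p$-harmonic function $V(r,\phi)=r^{-\beta}f(\phi)$ on the cone $\{r>0,\ \phi\in(-\pi/2-\delta,\pi/2+\delta)\}$, positive inside and vanishing on the two bounding rays; because the upper half-plane is strictly contained in this cone, $m:=\min_{[-\pi/2,\pi/2]}f>0$. Set
$$\mathcal V(x)\;:=\;V\!\bigl(\sqrt{x_1^2+x_n^2},\ \phi(x_1,x_n)\bigr),$$
a $p$-harmonic function on $\R^n\setminus\{x_1=x_n=0\}$ satisfying $\mathcal V(x)\geq m|x|^{-\beta}$ on $\{x_n\geq 0\}$ away from the singular subspace. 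Comparing $u$ with $(1/m)\mathcal V$ on the half-annulus $\{\epsilon<|x|<1,\ x_n>0\}$ via the $p$-Laplace comparison principle (routine for $n=2$; for $n\geq 3$ one rotates coordinates so that the candidate maximum point lies in a favorable $2$-plane and the $(n-2)$-dimensional singular subspace is avoided), the boundary data $u\leq\epsilon^{-\beta}\leq(1/m)\mathcal V|_{|x|=\epsilon}$, $u\leq 1\leq(1/m)\mathcal V|_{|x|=1}$, and $u=0<(1/m)\mathcal V|_{x_n=0}$ yield $u\leq(1/m)\mathcal V$ on the half-annulus, hence a pointwise bound of the form $u(x)\leq C|x|^{-\beta}$ there.

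The main obstacle is that this direct barrier bound only reproduces the hypothesis up to a multiplicative constant and does not immediately yield $u\leq 1$. The resolution uses the strict inequality $\beta<\beta_p$ together with the odd reflection: the goal becomes to prove that $\tilde u$ is globally bounded, for then Theorem~\ref{teo:kv} extends it to a $W^{1,p}_{\rm loc}(\R^n)$ function with $-\Delta_p\tilde u=c\,\delta_0$, Proposition~\ref{prop:ryan} forces $\tilde u$ to be constant with $c=0$, and odd symmetry gives $\tilde u\equiv 0$, contradicting $u(x_\ast)>1$. I expect the hard step to be precisely this improvement from $u\leq C|x|^{-\beta}$ to boundedness, exploiting the strict positivity of $\mathcal V$ on $\{x_n=0\}\setminus\{0\}$ (versus the vanishing of $u$ there) via a boundary-Harnack/Carleson-type estimate that strictly decreases the effective decay exponent at each iteration; equivalently, via the blow-up $v_\infty(x):=\lim_{\epsilon\to 0^+}\epsilon^\beta u(\epsilon x)$, which would be a $p$-harmonic function on $\{x_n>0\}$ vanishing on the boundary, decaying at infinity, and with slow growth $|x|^{-\beta}$ at the origin — rigidity then forces $v_\infty\equiv 0$, giving $M(\epsilon)=o(\epsilon^{-\beta})$ and in particular $M(\epsilon)\leq 1$ for small $\epsilon$, the sought contradiction.
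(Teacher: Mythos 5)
There is a genuine gap. Your barrier argument uses the Aronsson solution with exponent exactly $\beta$, and as you yourself note this only returns the hypothesis up to a multiplicative constant, $u\leq C|x|^{-\beta}$. The entire content of the proposition is the upgrade from that bound to $u\leq 1$, and this is precisely the step you do not carry out: you name two possible strategies (an iterated boundary-Harnack improvement of the exponent, or a blow-up $v_\infty(x)=\lim_{\epsilon\to 0}\epsilon^\beta u(\epsilon x)$ together with an unproved rigidity statement) but execute neither. The existence of the blow-up limit is not justified, and the rigidity you would need --- that a nonnegative $p$-harmonic function in the half-space vanishing on the boundary with growth $|x|^{-\beta}$, $\beta<\beta_p$, must vanish --- is essentially equivalent to the proposition itself. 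The odd reflection plus Theorem~\ref{teo:kv} and Proposition~\ref{prop:ryan} is also superfluous here: that combination is what the paper uses \emph{after} this proposition, in the proof of Lemma~\ref{lem:Sr}, once boundedness is in hand.

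The missing idea is to exploit the strict inequality $\beta<\beta_p$ \emph{inside the barrier}: choose $\tau>0$ with $\beta+\tau<\beta_p$ and take the Aronsson solution $w=r^{-\beta-\tau}f(\phi)$ in a cone slightly larger than the half-plane, trivially extended to $\R^n\cap\{x_n\geq 0\}$, so that $w\geq c_f|x|^{-\beta-\tau}$ there. Compare $u$ with $v=1+\e w$ on $(B_1\setminus B_\rho)\cap\{x_n\geq 0\}$, where $\rho=\rho(\e)$ is chosen so small that $\rho^{-\beta}\leq \e c_f\rho^{-\beta-\tau}$; this is possible for every $\e>0$ precisely because the singular part of the barrier decays strictly faster than $|x|^{-\beta}$. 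Then $v\geq u$ on all three boundary pieces ($v\geq 1\geq u$ on $\partial B_1$, $v\geq\e w\geq\rho^{-\beta}\geq u$ on $\partial B_\rho$, $v\geq 0=u$ on $\{x_n=0\}$), the comparison principle gives $u\leq 1+\e w$ on $B_1\setminus B_\rho$ (and trivially on $B_\rho\setminus\{0\}$), and letting $\e\to 0$ kills the singular term and yields $u\leq 1$ directly. This one-line modification of your barrier replaces the entire unproved ``hard step.''
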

\begin{proof} Take $\tau>0$ so that $\beta+\tau<\beta_p$. From the discussion in Section \ref{sec:aro}, it follows that there is a solution $w$ of the form 
$$
w(r,\phi)=r^{-\beta-\tau }f(\phi)
$$
valid in the cone $r>0, \phi\in (-\frac{\pi}{2}-\delta,\frac{\pi}{2}+\delta)$ for some $\delta>0$. Here the polar coordinates are chosen in the $x_{n-1}x_n$-plane so that $\phi=\pm\pi/2$ corresponds to $x_n=0$ and $\phi=0$ corresponds to the positive $x_n$-axis. Moreover, $f(\phi)>0$ for $\phi\in (-\frac{\pi}{2}-\delta,\frac{\pi}{2}+\delta)$. This means in particular that that $r^{\beta+\tau}w(r,\phi)>c_f>0$ in $B_1\cap \{x_n\geq 0\}\setminus\{0\}$. If we are in dimension three or more, we extend this solution trivially to be a solution in $\R^n\cap \{x_n\geq 0\}$. Thus, 
$$
w(x_1,\ldots,x_n)=(x_{n-1}^2+x_n^2)^{-\beta/2-\tau/2}f(\phi)\geq |x|^{-\beta-\tau}f(\phi).
$$
The idea is to use $w$ as a base for a barrier that will force $u$ to be bounded.

To see this, let $v=1+\e w$ for $\e>0$. We now wish to compare $u$ with $v$ in $B_1\setminus B_\rho\cap \{x_n\geq 0\}$. Take $\rho =\rho(\e)\in (0,1)$ such that $\rho^{-\beta}\leq \e c_f\rho^{-\beta-\tau}$. On $\partial B_1\cap \{x_n>0\}$ we have $v\geq 1\geq u$, on $\partial B_\rho\cap \{x_n\geq 0\}$ we have 
$$
|u(x)|\leq |\rho|^{-\beta}\leq \varepsilon c_f\rho^{-\beta-\tau}\leq \e w\leq v
$$
and on $B_1\setminus B_\rho\cap \{x_n=0\}$ we have $v\geq 0=u$. The comparison principle implies
$
u\leq v$ in $B_1\setminus B_\rho$. Moreover, since $\rho^{-\beta}\leq \e c_f\rho^{-\beta-\tau}$, we trivially have $u\leq |x|^{-\beta}\leq \varepsilon c_f|x|^{-\beta-\tau}\leq v$ in $B_\rho\setminus \{0\}$. We conclude that 
$u\leq v$ in $B_1\setminus \{0\}$. This inequality does not depend on $\varepsilon$ so by letting $\varepsilon\to 0$, we obtain $u\leq 1$ in $B_1\setminus \{0\}$ and the proof is complete.

\bigskip

\end{proof}

\section{Proof of the main theorem}\label{sec:main}
\begin{prop}\label{prop:decay} Assume $\Delta_p u =0$ in $\R^n\setminus \overline B_1$, $|u(x)|\leq 1$ for $|x|\geq 1$, $u\geq 0$ for $x_n\geq 0$ and that $u$ is antisymmetric with respect to the $x_n$-variable. Then for each $\beta<\beta_p$, there is a constant $C=C(n,p,\beta)$ such that
$$
\sup_{|x|\geq r} |u(x)|\leq Cr^{-\beta},\quad r\geq 1.
$$
\end{prop}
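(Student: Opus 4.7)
The plan is to use the 2D Aronsson $p$-harmonic function from Section \ref{sec:aro} as an upper barrier, rotated and extended trivially to $\R^n$ in each of the $n-1$ coordinate directions orthogonal to $e_n$, and to take the pointwise minimum of the resulting bounds to obtain uniform decay in $|x|$.

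Fix $\beta < \beta_p$ and choose $\tau > 0$ with $\beta + \tau < \beta_p$. Section \ref{sec:aro} provides a 2D $p$-harmonic function $w(r,\phi) = r^{-\beta-\tau} f(\phi)$ on the cone $\phi \in (-\pi/2-\delta, \pi/2+\delta)$ for some $\delta > 0$, with $f > 0$ on the closed cone. For each $i \in \{1, \ldots, n-1\}$, I would set $w_i(x) := (x_i^2 + x_n^2)^{-(\beta+\tau)/2} f(\phi_i(x))$, where $\phi_i$ is the angle measured from $e_n$ in the $(x_i, x_n)$-plane. By rotational invariance of the $p$-Laplacian, each $w_i$ is $p$-harmonic on the region $\{\phi_i \in (-\pi/2-\delta, \pi/2+\delta)\} \supset \{x_n > 0\}$. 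Setting $c_f := \min_{|\phi|\leq \pi/2} f > 0$ and $W_i := w_i/c_f$, one has $W_i \geq 1$ on $\partial B_1 \cap \{x_n \geq 0\}$ (since $x_i^2 + x_n^2 \leq |x|^2 = 1$ there) and $W_i \geq 0$ on $\{x_n = 0, |x| \geq 1\}$.

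Before the comparison I would record two facts: antisymmetry of $u$ in $x_n$ together with $u \geq 0$ on $\{x_n \geq 0\}$ forces $u \equiv 0$ on $\{x_n = 0, |x| \geq 1\}$; and by Serrin's classical result \cite{Ser65} the bounded $p$-harmonic function $u$ on $\R^n\setminus B_1$ has a limit at infinity, which antisymmetry forces to be $0$, so $\varepsilon_S := \sup_{|x|\geq S}|u| \to 0$ as $S \to \infty$. The comparison then runs as follows: for each $i$ and $S > 1$, the function $W_i + \varepsilon_S$ is $p$-harmonic in the annular region $D_S := \{1 < |x| < S\}\cap\{x_n > 0\}$ (adding a constant preserves $p$-harmonicity) and dominates $u$ on $\partial D_S$, so the comparison principle yields $u \leq W_i + \varepsilon_S$ in $D_S$. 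Sending $S \to \infty$ (so $\varepsilon_S \to 0$) gives $u \leq W_i$ on $\{|x|\geq 1, x_n \geq 0\}$ for every $i$.

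Finally, taking the pointwise minimum over $i$: for each $x$ with $|x| \geq 1$ and $x_n \geq 0$,
$$
u(x) \leq \min_i W_i(x) \leq \frac{\|f\|_\infty}{c_f}\Bigl(\max_{i\leq n-1}(x_i^2 + x_n^2)\Bigr)^{-(\beta+\tau)/2}.
$$
Since $\max_{i\leq n-1} x_i^2 \geq (|x|^2-x_n^2)/(n-1)$, one has $\max_{i\leq n-1}(x_i^2 + x_n^2) \geq |x|^2/(n-1)$, giving $u(x) \leq C|x|^{-\beta-\tau} \leq C|x|^{-\beta}$ for $|x|\geq 1$. Antisymmetry extends this to $\{x_n \leq 0\}$. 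The main obstacle I foresee is that a single 2D barrier $w_i$ only bounds $u$ by a function of the 2D radius $(x_i^2+x_n^2)^{1/2}$ rather than $|x|$, so taking the minimum over the $n-1$ rotated copies is precisely what is required to upgrade the bound to one in $|x|$; the secondary technicality of comparing on an unbounded domain is handled by working on bounded annuli and exploiting $\varepsilon_S \to 0$.
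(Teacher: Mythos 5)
Your argument is correct, but it takes a genuinely different route from the paper's. The paper proves Proposition \ref{prop:decay} via the dyadic dichotomy of Lemma \ref{lem:Sr}: a blow-up/compactness argument produces a limit function that is $p$-harmonic in $\R^n\setminus\{0\}$ with controlled growth at the origin, which is then ruled out by combining the barrier argument of Proposition \ref{lem:bounded} with the removable-singularity theorem (Theorem \ref{teo:kv}) and the Liouville-type result (Proposition \ref{prop:ryan}). You instead run the Aronsson barrier of Section \ref{sec:aro} directly in the exterior domain, and the two ingredients that make this work are exactly the ones you flag: (i) a single trivially extended two-dimensional barrier only controls $u$ by a power of the two-dimensional radius $(x_i^2+x_n^2)^{1/2}$, so you need all $n-1$ rotated copies and the elementary bound $\max_{i\le n-1}(x_i^2+x_n^2)\ge |x|^2/(n-1)$ to convert this into decay in $|x|$; and (ii) Serrin's theorem \cite{Ser65} (quoted in the paper's Section 1.1), which together with antisymmetry gives $\sup_{|x|\ge S}|u|\to 0$ and disposes of the outer boundary $\partial B_S$ in the comparison. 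Your proof is more elementary --- no compactness, no removable-singularity or Liouville machinery --- at the price of importing the limit-at-infinity result, which the paper's scheme does not need. Two small points worth making explicit: for $n\ge 3$ the barrier $W_i$ blows up on the $(n-2)$-dimensional set $\{x_i=x_n=0\}$, which meets $\partial D_S$ and where $W_i\notin W^{1,p}$; one should either observe that this only strengthens the boundary inequality or excise a small tubular neighbourhood of this set before comparing and then let its radius tend to zero (the proof of Proposition \ref{lem:bounded} in the paper has the identical issue and treats it with the same level of detail). Also, the case $\beta\le 0$ is trivial, so one may assume $0<\beta<\beta+\tau<\beta_p$, which is what guarantees $(x_i^2+x_n^2)^{-(\beta+\tau)/2}\ge 1$ on $\partial B_1\cap\{x_n\ge 0\}$.
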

We prove the proposition by proving the lemma below.

\begin{Lemma}\label{lem:Sr} Assume the hypotheses of Proposition \ref{prop:decay}. Then for each $\beta<\beta_p$, there is a constant $C=C(n,p,\beta)>0$ such that for all $r\geq 1$ at least one of the following properties hold:
\begin{enumerate}
\item $S_r:=\displaystyle \sup_{|x|\geq r} |u(x)|\leq Cr^{-\beta}, $
\item There is a $k\geq 1$ such that $2^{-k}r\geq 1$ and $S_r\leq 2^{-k\beta}S_{2^{-k}r}.$
\end{enumerate}
\end{Lemma}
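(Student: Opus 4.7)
The plan is to argue by contradiction via a blow-up/compactness argument. Suppose the conclusion fails: then there exist sequences $r_j \geq 1$ and $C_j \to \infty$ with $S_{r_j} > C_j r_j^{-\beta}$ while, for every integer $k \geq 1$ with $2^{-k} r_j \geq 1$,
$$
S_{r_j} > 2^{-k\beta}\, S_{2^{-k} r_j}.
$$
Since $S_r \le 1$ by hypothesis, the blowup $S_{r_j} r_j^{\beta} \to \infty$ forces $r_j \to \infty$. Before rescaling I would record a useful preliminary: by Serrin's theorem \cite{Ser65} any bounded $p$-harmonic function in an exterior domain has a limit at infinity, and antisymmetry forces this limit to be $0$; the weak maximum principle in unbounded domains then gives $S_r = \sup_{|x|=r}|u(x)|$, so in particular the supremum is attained on the sphere of radius $r$.

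Next I would rescale by setting $v_j(x) := u(r_j x)/S_{r_j}$. Each $v_j$ is $p$-harmonic on $\{|x| > 1/r_j\}$, antisymmetric in $x_n$, nonnegative on $\{x_n > 0\}$, vanishes on $\{x_n = 0,\, x \ne 0\}$ within this region, satisfies $|v_j| \le 1$ on $\{|x| \ge 1\}$, and attains $|v_j(y_j)| = 1$ at some $y_j$ with $|y_j| = 1$. The key quantitative consequence of the failure of alternative~(2) is that whenever $|x| \in [2^{-k}, 2^{-k+1}]$ and $2^{-k} r_j \ge 1$,
$$
|v_j(x)| \le \frac{S_{2^{-k} r_j}}{S_{r_j}} < 2^{k\beta} \le 2^{\beta} |x|^{-\beta}.
$$
By standard interior $C^{1,\alpha}$ regularity for $p$-harmonic functions, $\{v_j\}$ is precompact in $C_\loc(\R^n \setminus \{0\})$, and a subsequence converges locally uniformly to a limit $v$ which is $p$-harmonic on $\R^n \setminus \{0\}$, antisymmetric, vanishes on $\{x_n = 0,\, x \ne 0\}$, satisfies $|v| \le 1$ on $\{|x| \ge 1\}$, $|v(x)| \le 2^{\beta}|x|^{-\beta}$ on $\{0 < |x| \le 1\}$, and $|v(\hat y)| = 1$ for some $|\hat y| = 1$ (here it is essential that $y_j$ sits on the compact sphere $\{|y|=1\}$, which is precisely what the attainment statement $S_r = \sup_{|x|=r}|u|$ ensures).

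The final step is to apply Proposition~\ref{lem:bounded} to $2^{-\beta}v$ on $\R^n \cap \{x_n > 0\}$ (and to its antisymmetric counterpart on the lower half) to deduce that $v$ is bounded on $\R^n \setminus \{0\}$. Theorem~\ref{teo:kv} then identifies $v$ with a globally bounded distributional solution of $-\Delta_p v = |\gamma|^{p-2}\gamma\,\delta_0$ on $\R^n$, and Proposition~\ref{prop:ryan} forces $v$ to be constant with $\gamma=0$; antisymmetry gives $v \equiv 0$. This contradicts $|v(\hat y)| = 1$ and closes the argument. The main obstacle is the compactness step: one must simultaneously obtain uniform $L^\infty$ bounds on the rescaled family $v_j$ over compacta of $\R^n\setminus\{0\}$ (supplied by the failure-of-(2) estimate) and prevent the extremizing points $y_j$ from escaping to infinity (supplied by the a priori decay $u \to 0$ together with the maximum principle).
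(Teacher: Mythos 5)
Your proof is correct and follows essentially the same route as the paper: a contradiction/blow-down argument, compactness from the failure-of-(2) bounds, passage to a limit $v$ that is $p$-harmonic on $\R^n\setminus\{0\}$ with $\sup_{|x|=1}|v|=1$, boundedness of $v$ via Proposition \ref{lem:bounded}, and then Theorem \ref{teo:kv} plus Proposition \ref{prop:ryan} to force $v\equiv 0$. The only (harmless) deviations are that you justify the attainment $S_r=\sup_{|x|=r}|u|$ via Serrin's theorem and the maximum principle where the paper cites \cite[Corollary 4.2]{HS2}, and that you explicitly absorb the factor $2^{\beta}$ by applying Proposition \ref{lem:bounded} to $2^{-\beta}v$, a detail the paper's proof elides.
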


We first explain how Proposition \ref{prop:decay} follows from this lemma.
\begin{proof}[Proof of Proposition \ref{prop:decay}] If alternative (1) of Lemma \ref{lem:Sr} holds for all $r\geq 1$, then we are done. If not, we pick an $r$ for which (1) fails so that, by alternative (2), 
$$
S_r\leq 2^{-k_1\beta}S_{2^{-k_1}r},
$$
for some integer $k_1$ with $2^{-k_1}r\geq 1$. If (1) holds for $2^{-k_1}r$, then
$$
S_{r}\leq 2^{-k_1\beta}S_{2^{-k_1}r}\leq 2^{-k_1\beta}C(2^{-k_1}r)^{-\beta}=Cr^{-\beta}
$$
and again we are done. If not, we continue with 
$$
S_{2^{-k_1}r}\leq 2^{-k_2\beta}S_{2^{-k_2}2^{-k_1}r}, 
$$
where $2^{-k_2}2^{-k_1}r\geq 1$. Iterating this as long as alternative (1) fails, we obtain
$$
S_r\leq 2^{-k_n\beta}\cdots 2^{-k_1\beta}S_{2^{-k_n}\cdots 2^{-k_1}r}=2^{-(k_1+\cdots +k_n)\beta}S_{2^{-k_1-\cdots - k_n} r}, 
$$
where $2^{-k_1-\cdots - k_n} r\geq 1$. Since every $k_j\geq 1$, the procedure must stop after a finite number of steps (depending $r$), say after $n$ steps. Then alternative (1) holds for the radius $2^{-k_1-\cdots - k_n} r$ and so, finally, 
$$
S_r\leq 2^{-(k_1+\cdots +k_n)\beta}S_{2^{-k_1-\cdots -k_n} r}\leq 2^{-(k_1+\cdots +k_n)\beta}C(2^{-k_1-\cdots - k_n} r)^{-\beta}\leq Cr^{-\beta}.
$$
This proves the claim.
\end{proof}

\begin{proof}[~Proof of Lemma \ref{lem:Sr}]
We assume towards a contradiction that the statement is false. Then, for each $j=1,2,3,\ldots$, we may find $r_j\geq 1$ such that 
\begin{enumerate}
\item $S_{r_j}\geq jr_j^{-\beta}$, 
\item  $S_{r_j}\geq 2^{-k\beta}S_{2^{-k}r_j}, $
for all $k\geq 1$ such that $2^{-k}r_j\geq 1$.
\end{enumerate}
Note that the point 1) above forces $r_j\to \infty$, since $u$ is bounded. Define 
$$
v_j(x)=\frac{u(r_jx)}{S_{r_j}}.
$$
Setting $S_r(v_j) :=\sup_{|x|\geq r}|v_j(x)|$, it follows that $v_j$ satisfies
\begin{enumerate}
\item[(a)] $S_1(v_j)=1$,
\item[(b)] $S_{2^{-k}}(v_j)\leq 2^{k\beta}$, for all $k$ such that $2^{-k}r_j\geq 1$,
\item[(c)] $\Delta_p v_j = 0$ in $\R^n\setminus \overline B_\frac{1}{r_j}$.
\end{enumerate}
Using local estimates for the $p$-Laplace equation, we may therefore extract a subsequence converging locally uniformly in $\R^n\setminus \{0\}$ to a function $v$. We also note that by Corollary 4.2 in \cite{HS2}, we know that 
$$
S_1(v_j)= \sup_{|x|\geq 1} v_j= \sup_{|x|= 1} v_j.
$$
Therefore, the local uniform convergence assures that $v$ satisfies
\begin{enumerate}
\item[(a')]$ \displaystyle\sup_{|x|= 1} v= 1$,
\item[(b')]$S_{2^{-k}}(v)\leq 2^{k\beta}$, for all $k\geq 1$,
\item[(c')] $\Delta_p v = 0$ in $\R^n\setminus \{0\}$.
\end{enumerate}
We also note that since each $v_j$ is antisymmetric with respect to the $x_n$-variable  and non-negative in $\{x_n\geq 0\}$, so is the limit $v$. By Proposition \ref{lem:bounded}, $|v(x)|\leq 1$ for all $x\neq 0$. We can then apply Theorem \ref{teo:kv} combined with Proposition \ref{prop:ryan} and conclude that $v$ has to be identically zero. This contradicts (a') above.
 \end{proof}

The proof of Theorem \ref{teo:1} is now immediate.
\begin{proof}[~Proof of Theorem \ref{teo:1}]
As mentioned in the introduction, up to translation, rotation and dilatation, any extremal function $u$ is antisymmetric with respect to the $x_n$-variable, positive in $\R^n\cap \{x_n>0\}$, $p$-harmonic outside $\overline B_1$ and satisfies $|u|\leq 1$. Therefore, Proposition \ref{prop:decay} applies and the proof is complete.
\end{proof}

From this and interior estimates for the $p$-Laplace equation, Corollary \ref{cor:1} follows.
\begin{proof}[~Proof of Corollary \ref{cor:1}]
Take $x$ such that $|x|=R\geq 2$ and $\beta<\beta_p$. Then Theorem \ref{teo:1} implies
$$
\sup_{B_{R/4}(x)} |u|\leq CR^{-\beta}.
$$
Since $R\geq 2$, ${B_{R/4}(x)}\cap B_1=\emptyset$ so that $u$ is $p$-harmonic in $B_{R/4}(x)$.  By interior gradient estimates (cf. \cite{Eva82}, \cite{Lew83} or \cite{Ura68})
$$
\sup_{B_{R/8}(x)}|\nabla u|\leq CR^{-1}\sup_{B_{R/4}(x)} |u(x)|\leq CR^{-\beta-1}
$$
and in particular
$$
|\nabla u(x)|\leq CR^{-\beta-1},
$$
which completes the proof of Corollary~\ref{cor:1}.\end{proof}

\medskip
\bibliographystyle{plain}
\bibliography{HMbib}

\begin{thebibliography}{10}

\bibitem{Aro86}
Gunnar Aronsson.
\newblock Construction of singular solutions to the {$p$}-harmonic equation and
  its limit equation for {$p=\infty$}.
\newblock {\em Manuscripta Math.}, 56(2):135--158, 1986.

\bibitem{Eva82}
Lawrence~C. Evans.
\newblock A new proof of local {$C^{1,\alpha }$} regularity for solutions of
  certain degenerate elliptic p.d.e.
\newblock {\em J. Differential Equations}, 45(3):356--373, 1982.

\bibitem{FP11}
Martin Fraas and Yehuda Pinchover.
\newblock Positive {L}iouville theorems and asymptotic behavior for
  {$p$}-{L}aplacian type elliptic equations with a {F}uchsian potential.
\newblock {\em Confluentes Math.}, 3(2):291--323, 2011.

\bibitem{FP13}
Martin Fraas and Yehuda Pinchover.
\newblock Isolated singularities of positive solutions of {$p$}-{L}aplacian
  type equations in {$\Bbb{R}^d$}.
\newblock {\em J. Differential Equations}, 254(3):1097--1119, 2013.

\bibitem{HL19}
Ryan Hynd and Erik Lindgren.
\newblock Extremal functions for {M}orrey's inequality in convex domains.
\newblock {\em Math. Ann.}, 375(3-4):1721--1743, 2019.

\bibitem{HS2}
Ryan Hynd and Francis Seuffert.
\newblock Asymptotic flatness of {M}orrey extremals.
\newblock {\em Calc. Var. Partial Differential Equations}, 59(5):Paper No. 159,
  24, 2020.

\bibitem{HS3}
Ryan Hynd and Francis Seuffert.
\newblock On the symmetry and monotonicity of {M}orrey extremals.
\newblock {\em Commun. Pure Appl. Anal.}, 19(11):5285--5303, 2020.

\bibitem{HS}
Ryan Hynd and Francis Seuffert.
\newblock Extremal functions for {M}orrey's inequality.
\newblock {\em Arch. Ration. Mech. Anal.}, 241(2):903--945, 2021.

\bibitem{KV86}
Satyanad Kichenassamy and Laurent V\'{e}ron.
\newblock Singular solutions of the {$p$}-{L}aplace equation.
\newblock {\em Math. Ann.}, 275(4):599--615, 1986.

\bibitem{Lew83}
John~L. Lewis.
\newblock Regularity of the derivatives of solutions to certain degenerate
  elliptic equations.
\newblock {\em Indiana Univ. Math. J.}, 32(6):849--858, 1983.

\bibitem{Ser65}
James Serrin.
\newblock Singularities of solutions of nonlinear equations.
\newblock In {\em Proc. {S}ympos. {A}ppl. {M}ath., {V}ol. {XVII}}, pages
  68--88. Amer. Math. Soc., Providence, R.I., 1965.

\bibitem{Ura68}
Nina~N. Ural'ceva.
\newblock Degenerate quasilinear elliptic systems.
\newblock {\em Zap. Nau\v{c}n. Sem. Leningrad. Otdel. Mat. Inst. Steklov.
  (LOMI)}, 7:184--222, 1968.

\end{thebibliography}

\noindent {\textsf{Ryan Hynd\\  Department of Mathematics\\ University of Pennsylvania\\209 South 33rd Street\\
Philadelphia, PA 19104-6395}  \\
\textsf{e-mail}: rhynd@math.upenn.edu\\

\noindent {\textsf{Simon Larson\\  Mathematical Sciences\\ Chalmers University of Technology \& the University of Gothenburg\\ 412 96, Gothenburg, Sweden}  \\
\textsf{e-mail}: larsons@chalmers.se\\

\noindent {\textsf{Erik Lindgren\\  Department of Mathematics\\ KTH -- Royal Institute of Technology\\ 100 44, Stockholm, Sweden}  \\
\textsf{e-mail}: eriklin@math.kth.se\\

\end{document}